\documentclass[reqno]{amsart}  

\usepackage{latexsym}
\usepackage{amsmath,amsthm}
\usepackage{amssymb}
\usepackage{graphicx}

\input diagxy
 \xyoption{curve}

\def\R{\mathbb{R}}

\theoremstyle{plain}
\newtheorem{theorem}{Theorem}[]
\newtheorem{lemma} [theorem]{Lemma}
\newtheorem{proposition} [theorem]{Proposition}

\newtheorem{corollary} [theorem]{Corollary}

\theoremstyle{definition}
\newtheorem{definition} [theorem]{Definition}

\newtheorem{remark}{Remark}

\begin{document}

\title{Hopf-Rinow Theorem of sub-Finslerian geometry}
\author{Layth M. Alabdulsada and L\'aszl\'o Kozma }

\address{Layth M. Alabdulsada,  Institute of Mathematics, University of Debrecen, H-4002 Debrecen, P.O. Box 400, Hungary}
\email{layth.muhsin@science.unideb.hu}
\address{L\'aszl\'o  Kozma, Institute of Mathematics,
 University of Debrecen, H-4002 Debrecen, P.O. Box 400, Hungary}
\email{kozma@unideb.hu}

\subjclass[2000]{53C60, 53C17, 53C22} \keywords{sub-Finslerian geometry; sub-Hamiltonian geometry; Legendre transformation; sub-Finsler bundle; normal geodesics; Exponential map; Hopf-Rinow theorem.}

\bibliographystyle{alpha}

\begin{abstract}
The sub-Finslerian geometry means that the metric $F$ is defined only on a given subbundle of the tangent bundle, called a horizontal bundle.
In the paper, a version of the Hopf-Rinow theorem is proved in the case of sub-Finslerian manifolds, which relates the properties of completeness, geodesically completeness, and compactness. The sub-Finsler bundle, the exponential map and the Legendre transformation are deeply involved in this investigation.
\end{abstract}
\maketitle

\section{Introduction}
In the Riemannian and Finslerian geometry, there are two concepts of completeness. The first is the completeness in the sense of metric spaces, using the Riemannian metric. Secondly, a Riemannian or Finsler manifold $M$  is called geodesically complete if any geodesic $\gamma(t)$ starting from $x \in M$ is defined for all values of $t \in  \mathbb{R}$. On the other hand, the completeness in the Finsler geometry is divided into forward and backward geodesically completenesses, according to forward and backward distance metrics,  resp.

Hopf-Rinow theorem is a basic theorem of complete Riemannian manifolds, which connects the completeness properties with compactness, and the exponential map. Its consequence says that any two points of a complete manifold can be connected by a length minimizing geodesic. In 1931, H. Hopf and W. Rinow showed their theorem only for surfaces, but the proof in higher dimensions is not significantly different. Hopf-Rinow theorem has been studied in detail in both Riemannian and Finslerian geometries in the literature, the best general references here are \cite{BCS00, Do}, \cite{ON}. In the Finsler case forward geodesic completeness is involved, only.

After a development of the sub-Riemannian geometry as well as its generalization, namely sub-Finslerian geometry,  the generalization of core theorems of Riemannian geometry has been started. Relating to our issue, Strichartz \cite{St86}, Rifford \cite{RI} and Agrachev et al. \cite{ABB} gave an extension for a sub-Riemannian case, while Bao et al. \cite{BCS00} showed the Finslerian version of Hopf-Rinow theorem. It turned out that in sub-Riemannian geometry, for general complete sub-Riemannian structures, the exponential mapping is not surjective. This is due to the fact that we may have abnormal minimizing curves and this is the case in the sub-Finslerian context, too.

To prove the statements of Hopf-Rinow theorem in the sub-Finsler setting, we need the following concepts and explanations. First, in Section 2 we review some of the standard facts of sub-Finsler geometry. In the third Section, we extend our discussion about the Legendre transformation (see \cite{L.K}) to define the sub-Finsler manifold on the distribution $\mathcal{D}^*$ of the cotangent space, where we look more closely at a sub-Hamiltonian $H$ defined on $\mathcal{D}^*$, induced by the sub-Finslerian metric $F^*$.
Afterwards, we construct a sub-Finsler bundle, which plays a major role in the formalization of the sub-Hamiltonian in sub-Finsler geometry, in Section 4. Moreover, the sub-Finsler bundle allows an orthonormal frame for the sub-Finsler structure. In Section 5, we introduce the notion of an exponential map in sub-Finsler geometry. In the last section our main theorem is stated and proved.

\section{Definitions and some properties of sub-Finsler manifolds}
\label{def}
In this section we review some of the standard facts on the sub-Finsler metrics and
set up the notations and the terminology which will play an essential role in this paper, for more details we refer the reader to \cite{L.K, L.K1, L.K2}.
\begin{definition} \label{FH}
Let $M$ be an $n$--dimensional connected manifold. A sub-Finslerian structure on $M$ is a triple $(\mathcal{D}, \sigma , F)$ where:
\begin{itemize}
  \item [(1)] $(\mathcal{D}, \pi_{\mathcal{D}})$ is a vector bundle on $M$.
  \item [(2)] $\sigma :\mathcal{D} \to TM$ is a morphism of vector bundles.
  In particular, the following diagram is commutative
  \begin{center}
\begin{minipage}[t]{4cm}
$$\bfig
  \morphism(400,0)|r|<0,-400>[`M;\pi]
   \morphism(0,0)|a|<400,0>[\mathcal{D}`TM;\sigma]
    \morphism(0,0)|b|<400,-400>[`M;\pi_{\mathcal{D}}]
 \efig$$
\end{minipage}
\end{center}
such that $\pi_{\mathcal{D}}: \mathcal{D} \to M$ and $\pi: TM \to M$ are the canonical projections.
  \item [(3)] A function $F: \widetilde{\mathcal{D}}\rightarrow \R$, where
$\widetilde{\mathcal{D}}=\mathcal{D} \setminus \{0\}$, called a {\em sub-Finsler metric}, which satisfies
the following properties:
\\
$\bullet$ ($\mathbf{Positive \ definiteness}$): $F_x(v) > 0$ for all $v\in \widetilde{\mathcal{D}}, \, x\in  M$.\\
 $\bullet$ ($\mathbf{Regularity}$): $F$ is smooth, i.e. $C^{\infty}$ on $\widetilde{\mathcal{D}}$.\\
  $\bullet$ ($\mathbf{Positive \ homogeneity}$): $F_x(\lambda v)=\lambda F_x(v)$ for all $v\in
    \widetilde{\mathcal{D}}_x\  \mbox{and}\ \lambda \in \mathbb{R}^+$.\\
$\bullet$ ($\mathbf{Strong \ convexity \ condition}$): The Hessian matrix of $F^2$ with respect to the coordinates on the fibre is positive definite.

One can replace the strong convexity condition by the following subadditivity property (in an equivalent terminology, a triangle inequality):
$$F_x(v+u) \leqslant F_x(v) + F_x(u),\ \mathrm{for \ all} \ v,u \ \in  \widetilde{\mathcal{D}}.$$
\end{itemize}
    A {\em sub-Finsler manifold} is a smooth manifold $M$ endowed with a sub-Finslerian structure, i.e. the triple $(\mathcal{D}, \sigma, F)$.
\end{definition}

Let $\mathcal{D}_x$ be the fiber over $x\in M$. The last
condition of the sub-Finsler metric means that the matrix $\frac{\partial^2F^2}{\partial
v^i\partial v^j}(x,v)$ is positive definite for all $v=(v^1,\dots,
v^k)\in \mathcal{D}_x$. Equivalently, the corresponding
indicatrix
$$ I_x=\{v \, | \, v\in \mathcal{D}_x, \ F_x(v)=1\}$$ is strictly convex.

The following technique describes the association between the sub-Finsler structure $(\mathcal{D}, \sigma, F)$  and a Finsler metric $\hat{F}$ on  $\mathrm{Im} (\sigma) \subset TM$:

For each $u \in \mathrm{Im} (\sigma)_x \subset T_xM$ and $x \in M$, we have
$$\hat{F}_x(u) = \inf_{v} \{F_x(v) | \ v \in \mathcal{D}_x,  \ \sigma(v)= u \}.$$

From now on we suppose that $\mathcal{D} \subset TM$, \, $\sigma :\mathcal{D} \to TM$ is the inclusion $i :\mathcal{D} \to TM$ and $F$ is a sub-Finsler metric on $\mathcal{D}$.

As in the sub-Riemannian case, we call $\mathcal{D}$ the \textit{horizontal
distribution}. A piecewise smooth curve $\gamma :[0, T]\rightarrow M$ is called
\textit{horizontal}, or \textit{admissible} if $\dot{\gamma} (t)\in
\mathcal{D}_{\gamma(t)}$ for all $t\in [0, T]$, that is, $\gamma(t)$ is
tangent to $\mathcal{D}$. The length of $\gamma$ is
defined as usual by
$$\ell(\gamma)=\int_0^T F(\dot{\gamma}(t))dt.$$

Equivalently, as in the Finslerian case, we observe that it suffices to
minimize the {\it energy}
$$E(\gamma)=\frac{1}{2}\int_0^T F^2(\dot{\gamma}(t))dt.$$
instead of length $\ell(\gamma)$.

The length induces a sub-Finslerian distance
$d(x, y)$ between two points $x$ and $y$ as in Finsler geometry:
$$ d(x, y)=\inf \{\ell(\gamma) \ | \gamma:[0,T] \to M \ \mathrm{horizontal},\ \gamma(0)= x, \gamma(T)= y \},$$
where we consider the infimum over all 
horizontal curves
joining $x$ and $y$. The distance is infinite if there is no such a horizontal
curve between $x$ and $y$. In addition, the horizontal
curve $\gamma :[0, T]\rightarrow M$ is called a {\em length minimizing} (or simply a {\em minimizing}) geodesic, if
it realizes the distance between its end points, that is, $\ell(\gamma) = d(\gamma (0), \gamma (T))$.

Chow theorem answers to the following question: given two points
$x$ and $y$ in a sub-Finsler manifold, is there a horizontal curve
that joins $x$ and $y$?

In the case of an involutive
distribution $\mathcal{D}$ the Frobenius theorem asserts that  the set of
the horizontal paths through $S$ form a smooth immersed submanifold,
the leaf through $x$, of dimension equal to the rank of
distribution $k$. In this case, if $\mathcal{D}$ is involutive and $y$ is
not contained in the leaf through $y$, there is no any horizontal
curve joining $x$ and $y$.

A positive answer is given by the Chow theorem in the case of
bracket generating distributions, which are the "contrary" of the
involutive distributions.

\begin{definition}\cite{Mo02}
A distribution $\mathcal{D}$ is said to be {\em bracket generating} if any local
frame $X_i$ of $\mathcal{D}$, together with all of its iterated Lie
brackets spans the whole tangent bundle $TM$.
\end{definition}

\begin{theorem}(Chow's theorem \cite{Mo02})
If $\mathcal{D}$ is a bracket generating distribution on a connected
manifold $M$ then any two points of $M$ can be joined by a
horizontal path.
\end{theorem}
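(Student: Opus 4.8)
The plan is to argue by an openness–connectedness dichotomy, which is the classical route to Chow's theorem. For $x\in M$ let $\mathcal{A}(x)$ be the set of points of $M$ that can be joined to $x$ by a horizontal piecewise smooth curve. The concatenation of horizontal curves is horizontal, and a horizontal curve traversed backwards is again horizontal --- because each fibre $\mathcal{D}_{\gamma(t)}$ is a linear subspace, so $-\dot\gamma(t)\in\mathcal{D}_{\gamma(t)}$ --- hence ``$y\in\mathcal{A}(x)$'' is an equivalence relation and the sets $\mathcal{A}(x)$ partition $M$. If each $\mathcal{A}(x)$ is open, then $M\setminus\mathcal{A}(x)$, being a union of such sets, is open too, so $\mathcal{A}(x)$ is simultaneously open and closed; connectedness of $M$ then forces $\mathcal{A}(x)=M$, which is precisely the assertion.

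Thus everything reduces to showing that $\mathcal{A}(x)$ is open, i.e.\ that for every $p\in\mathcal{A}(x)$ the set of endpoints of horizontal curves issuing from $p$ contains a neighbourhood of $p$. Fix such a $p$ and choose, on a neighbourhood $U$ of $p$, a local frame $X_1,\dots,X_k$ of $\mathcal{D}$ (available since $\mathcal{D}$ is a subbundle of constant rank $k$). Let $\phi_i^t$ be the local flow of $X_i$. Every curve $t\mapsto\phi_i^t(q)$ and every curve $t\mapsto\phi_i^{-t}(q)$ is horizontal, so for any word $i_1,\dots,i_N$ and small times $t_1,\dots,t_N$ the composite
$$\Phi(t_1,\dots,t_N)=\phi_{i_N}^{t_N}\circ\cdots\circ\phi_{i_1}^{t_1}(p)$$
is the endpoint of a horizontal curve from $p$, hence lies in $\mathcal{A}(p)=\mathcal{A}(x)$. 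It remains to exhibit a word for which some such composite map, in $n$ real parameters and assembled from the elementary flows $\phi_i^{\pm}$ together with their commutator combinations, is a submersion at the parameter value yielding $p$; its image is then an open subset of $M$ contained in $\mathcal{A}(x)$ and containing $p$.

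This is where the bracket-generating hypothesis enters, through the standard commutator-of-flows expansion: in local coordinates, for vector fields $X,Y$,
$$\phi_Y^{-s}\circ\phi_X^{-s}\circ\phi_Y^{s}\circ\phi_X^{s}(q)=q+s^2[X,Y]_q+o(s^2),$$
so after the reparametrisation $s=\sqrt{\tau}$ the flow of $[X,Y]$ is recovered as a $C^1$ function of $\tau$ built from $\phi_X^{\pm},\phi_Y^{\pm}$. Iterating, the flow along any iterated Lie bracket of $X_1,\dots,X_k$ is approximated, in the same $C^1$ sense, by compositions of the elementary flows. Since the iterated brackets span $T_pM$, we may select bracket vector fields $Z_1,\dots,Z_n$ that are linearly independent at $p$, form the $n$-parameter map assembling their approximating composite flows based at $p$, and note that its differential at the origin is the isomorphism $(\tau_1,\dots,\tau_n)\mapsto\sum_j\tau_j Z_j|_p$. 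The inverse function theorem then delivers the open image. (Equivalently, this step is exactly Sussmann's orbit theorem applied to the family $\{X_1,\dots,X_k\}$: the orbit through $p$ is an immersed submanifold whose tangent space at $p$ contains all iterated brackets, hence equals $T_pM$, so the orbit is open.)

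The main obstacle is precisely this last step --- turning the heuristic ``commutators of flows produce Lie brackets'' into a genuine full-rank statement. The delicate point is to choose the fractional-power time reparametrisations so that the composite-flow map is really $C^1$ in its parameters and has the prescribed iterated brackets as its partial derivatives at the base point, and then to feed a maximal linearly independent family of such brackets --- supplied by the bracket-generating condition --- into the inverse function theorem. The remaining ingredients, namely the equivalence relation, the reduction to openness, and the connectedness argument, are routine.
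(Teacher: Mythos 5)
The paper does not prove this statement at all: Chow's theorem is quoted with a citation to Montgomery \cite{Mo02}, so there is no internal proof to compare against. Your proposal is the classical Chow--Rashevskii argument --- partition $M$ into horizontal-accessibility classes $\mathcal{A}(x)$, reduce by connectedness to showing each class is open, and obtain openness from the bracket-generating hypothesis via commutators of flows of a local frame --- which is essentially the proof found in the cited source, and it is correct in outline. The one step that is genuinely delicate is exactly the one you flag: the map $\tau\mapsto\phi_Y^{-\sqrt{\tau}}\circ\phi_X^{-\sqrt{\tau}}\circ\phi_Y^{\sqrt{\tau}}\circ\phi_X^{\sqrt{\tau}}(q)$ is a priori defined only for $\tau\geq 0$, one needs reversed words to realize $-[X,Y]$, and for higher-order brackets the bookkeeping needed to make the composite map $C^1$ with prescribed partial derivatives is nontrivial; a fully rigorous treatment either carries this out by induction on bracket length (as Montgomery does) or, as you note, invokes Sussmann's orbit theorem, whose conclusion (the orbit through $p$ is an immersed submanifold whose tangent space contains all iterated brackets, hence is open when $\mathcal{D}$ is bracket generating) closes the argument immediately. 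With that step delegated to the orbit theorem, your proof is complete; it also correctly isolates the fact that the sub-Finsler metric $F$ plays no role here, the statement being purely about the distribution, and that reversal of horizontal curves is unproblematic since each fibre $\mathcal{D}_{\gamma(t)}$ is a linear subspace even though $F$ itself may be non-reversible.
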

\begin{remark}
  The problem of minimizing the length of a curve joining two given points $x$ and $y$ is equivalent to a time optimal problem:
where the control bundle is $(\mathcal{D}, \pi_{\mathcal{D}} , M)$ and we are searching for such a curve $\gamma(t)$ and a control curve $v(t) \in \mathcal{D}_{\gamma(t)}$
minimizing the time $T$ needed to connect $x$ and $y$.
\end{remark}

\section{Legendre transformation of sub-Finslerian geometry}
Let $\mathcal{D}^*$ be a distribution of rank $s$ on a smooth manifold $M$ that assigns to each point $x \in U \subset M$ a linear subspace  $\mathcal{D}^*_x \subset T^*_xM$ of dimension $s$, see \cite{L.K2}. In other words, $\mathcal{D}^*$ of rank $s$  is a smooth subbundle of rank $s$ of the cotangent bundle $T^*M$. Such a field of cotangent $s$-planes is spanned locally by $s$ pointwise linear independent smooth differential $1$-forms, namely, $$\mathcal{D}_x^*= \textrm{span} \{\alpha_{1}(x), \ldots, \alpha_{s}(x)\}, \qquad \alpha_i(x) \in \mathfrak{X}^*(M).$$
In addition, we refer to $\mathcal{D}^0_x $ as the annihilator of the distribution $\mathcal{D}$ (isomorphic to $\mathcal{D}$), of rank $n-k$, which is the set of all covectors that annihilates the vectors in $\mathcal{D}_x$, i.e.
\begin{equation}\label{ann1}
  \mathcal{D}^0_x = \{\alpha \in T^*_xM : \alpha(v)= 0 \ \forall \ v \in \mathcal{D}_x\}.
\end{equation}

In \cite{L.K}, we introduced the Legendre transformation of sub-Finsler geometry. Let us briefly recall it:

The {\it sub-Lagrange function} $L: \mathcal{D} \to \mathbb{R},$ determined by $F$ is given in the following way:
$L= \frac{1}{2}F^2.$
The fiber derivative of $L$ defines the map
 $$\mathcal{L}_L: \mathcal{D}  \to\mathcal{D}^*,\qquad
 \mathcal{L}_L(v)(w)= \frac{d}{dt} L_x(v+ tw), \ \text{where} \ v, w \in \mathcal{D}_x,$$
called the {\it Legendre transformation} of $(M, \mathcal{D}, F).$

 We denote by $(x^i)$ the coordinate in a neighborhood $U \subset M$ with $(x^i, v^a)$ in $\mathcal{D}|_U \subset TM$, and $(x^i, p_a)$ in $\mathcal{D}^*|_U \subset T^*M$, respectively, where $i=1, \dots , n, \ a=1, \dots , k$. Then the relation of the distribution $\mathcal{D}$ of the tangent bundle and the distribution $\mathcal{D}^* $ of the cotangent bundle is given by the Legendre transformation in local coordinates as follows
$$\mathcal{L}_L(x^i, v^a)= (x^i,  \frac{\partial L}{\partial v^a}).$$
Then the {\it sub-Hamiltonian} is given by
$$H: \mathcal{D}^* \to \mathbb{R},$$
$$ H=  \iota_{\mathcal{L}^{-1}_L} - L\circ \mathcal{L}^{-1}_L,$$
where $\iota_{v} (p)= \langle v, p\rangle = p(v) $ for any $v = \mathcal{L}^{-1}_{L}(p) \in \mathcal{D}$ and $p \in \mathcal{D}^*$. Moreover, locally given by,
 $$ H(x^i, p_a) = v^ap_a - L(x^i, v^a), \ \mbox{where} \ p_a = \frac{\partial L}{\partial v^a}.$$
Secondly, using the fiber derivative of $H$, we define the Legendre transformation
of the sub-Hamiltonian $H$ in the following way:
$$\mathcal{L}_{H}: \mathcal{D}^* \to\mathcal{D},$$
For any $p, q \in \mathcal{D}^*_x$, it holds
$$q(\mathcal{L}_{H}(p))= \frac{d}{dt} H(x, p+ t q).$$
This locally relates the distribution $\mathcal{D}^* $ of the cotangent bundle and the distribution $\mathcal{D}$ of the tangent bundle according to the next expression:
$$\mathcal{L}_{H}(x^i, p_a)= (x^i,  \frac{\partial H}{\partial p_a}).$$
Naturally,
$\mathcal{L}_{L}$ and $\mathcal{L}_{H}$ are inverses of each other:
\begin{align*}
 \mathcal{L}_{H} \circ \mathcal{L}_L&= 1_{\mathcal{D}},  &
    \mathcal{L}_L \circ \mathcal{L}_{H} &= 1_{\mathcal{D}^*}.
\end{align*}

In other hand, for every $p \in  \mathcal{D}^*_x$, one can define the sub-Finsler metric $F^* \in \widetilde{\mathcal{D}}^* \sim T^*M\setminus\mathcal{D}^0$ with help of the indicatrix $I_x$ as follows:
\begin{equation*}
F_x^*(p):=  \sup_{\substack{w \in I_x}} \ p(w) = \sup_{\substack{0 \neq v \in \mathcal{D}_x}} \ p [\frac {v}  {F_x(v)}].
\end{equation*}

Observed that $\widetilde{\mathcal{D}}^*$ is the subbundle of the cotangent bundle obtained by removing the zero cotangent vector from each fibre. In fact, $F^*$ turns out to meet the same properties that mentioned in Definition \ref{FH}, but on $\mathcal{D}^*$  instead of $\mathcal{D}$. Then

$$F^*(p)= F(v), \ \mathrm{where}\ p =\mathcal{L}_L(v),\quad\mbox{and}\quad
 H := \frac {1}{2} (F^*)^2,$$
see details in \cite{BCS00}.

\section{Sub-Finsler bundle}

We define in this section a sub-Finsler vector bundle which will play a major role in the formalization of the sub-Hamiltonian in sub-Finsler geometry. Let us consider first the covector subbundle $ (\mathcal{D}^*, \tau, M)$ with the projection ${\tau} : \mathcal{D}^* \to M$, which is a subbundle of rank $k$ (= dim $\mathcal{D}^*$) in the cotangent bundle of $T^*M$.
The illustrious role in our consideration will play by the pullback bundle
 $\tau^*({\tau})= (\mathcal{D}^* \times \mathcal{D}^*, \mathrm{pr}_1, \mathcal{D}^*)$ of ${\tau}$ by $\tau$ as follows:
   $$\mathcal{D}^* \times_{M} \mathcal{D}^* := \{(p, q) \in  \mathcal{D}^* \times \mathcal{D}^* | \ \ \tau(p)= {\tau}(q) \},$$
$$\mathrm{pr}_1 : \mathcal{D}^* \times_{M} \mathcal{D}^* \to \mathcal{D}^*, \ (p, q) \mapsto p.$$

Throughout, we call the above pullback bundle as the {\em sub-Finsler bundle} over $\mathcal{D}^*$.
Now, if $p$ is fixed, then
\begin{align*}
  (\mathrm{pr}_1)^{-1}(p) & = \{ (p, q) \in  \mathcal{D}^* \times \mathcal{D}^*  | \ \ q \in \mathcal{D}^*_{\tau(q)} \} \\
   & = \{p\} \times \mathcal{D}^*_{\tau(p)},
\end{align*}
is a fiber of the sub-Finsler bundle over $p \in \mathcal{D}^*$.

We can introduce a Riemannian metric $g^*$ on the sub-Finsler vector bundle induced by the sub-Hamiltonian $H$ as follows:
$$
 \langle  q, r\rangle_p = g^*_p (q,r) := \frac{\partial^2 H(p+tq+sr)}{\partial t \partial s}{|_{t,s=0}} \qquad \mbox{for all}\ q,r\in \mathcal{D}^*_{{\tau}(p)},
$$
which locally means
$$ g^{*ij}= \frac{\partial^2 H}{\partial p_i\partial p_j}.
$$

Now the sub-Finsler bundle $\tau^*({\tau})$ allows $k$ covector fields $X_1, X_2, \dots, X_k$  which form an orthonormal frame with respect to the induced Riemannian metric $g^*$.

Notice that $X_i(p)$ is a covector field that depends on the position $x \in M$ and the direction $p \in \mathcal{D}^*$. Moreover, one can choose in a way that $X_i(p)$ is a homogeneous of degree zero in $p$,
i.e. $X_i(tp) = t^0X_i(p)= X_i(p).$
According to the above metric ${g^*}^{ij}$ on $M$ which is homogeneous of degree zero, we could generate a new formalism of the sub-Hamiltonian function in the components $p_i$ (induces naturally by the inner product, see \cite{CD})
\begin{equation}\label{M}
  H(x, p) =  \frac{1}{2} \sum_{i,j=1}^{n}  {g^*}^{ij} p_i p_j,
\end{equation}
such that this metric defined in the extended Finsler metric which was shown in \cite{L.K}.
We can write the sub-Hamiltonian function (\ref{M}) in a more useful way using the orthonormality of $X_i$ as follows
\begin{equation}\label{H}
  H(x, p) =  \frac{1}{2} \sum_{i=1}^{k}  \langle  p, X_i(p)\rangle^2,\qquad p\in \mathcal{D}^*_x.
\end{equation}
One can easily check the homogeneity of degree 2 in $p$ of the sub-Hamiltonian function $H(x, p)$:
\begin{equation}\label{HOM}
     H(x, tp)   = \frac{1}{2} \sum_{i=1}^{k}  \langle  tp, X_i(tp)\rangle^2
        = \frac{t^2}{2} \sum_{i=1}^{k}  \langle  p, X_i(p)\rangle^2
= t^2H(x, p).
\end{equation}

The importance of  $H(x, p)$ is to define sub-Finslerian geodesics.
Our function $H(x, p)$ produces a system of sub-Hamiltonian differential equations,
since it is a smooth function on $\mathcal{D}^*$. Such differential equations are in terms of canonical coordinates $(x^i, p_i)$.
\begin{definition} \label{NO}
The generated sub-Hamiltonian differential equations
\begin{align*}
\dot{x}^i & = \frac{\partial H}{\partial p_i}(x, p), \\
\dot{p}_i & = - \frac{\partial H}{\partial x^i}(x, p), \quad i= 1, \ldots, n,
\end{align*}
are called {\it normal geodesic equations}.
 \end{definition}
 \begin{lemma} \label{SOl1}
  If $\xi(t):=(x(t), p(t))$ is a solution of the sub-Hamiltonian system for all $t \in \mathbb{R}$, then there exists a constant $c \in \mathbb{R}$ such that $H(x(t), p(t))=c$.
\end{lemma}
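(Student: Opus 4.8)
The plan is to prove that the scalar function $h(t) := H(x(t),p(t))$ is constant by showing $\dot h \equiv 0$ on all of $\mathbb{R}$ and then invoking the connectedness of $\mathbb{R}$. Since $H$ is smooth on $\mathcal{D}^*$ (it is the fibrewise quadratic form \eqref{M}, equivalently \eqref{H}) and $\xi=(x,p)$ solves the normal geodesic system, standard ODE theory makes $t\mapsto\xi(t)$ smooth, so $h$ is a well-defined smooth function on $\mathbb{R}$.

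First I would fix an arbitrary $t_0\in\mathbb{R}$, pass to canonical coordinates $(x^i,p_i)$ on a neighbourhood of $\xi(t_0)$, and differentiate $h$ by the chain rule:
\[
\dot h(t) \;=\; \sum_{i}\frac{\partial H}{\partial x^i}\bigl(x(t),p(t)\bigr)\,\dot x^i(t) \;+\; \sum_{i}\frac{\partial H}{\partial p_i}\bigl(x(t),p(t)\bigr)\,\dot p_i(t).
\]
Then I would substitute the normal geodesic equations of Definition \ref{NO}, namely $\dot x^i=\partial H/\partial p_i$ and $\dot p_i=-\partial H/\partial x^i$, whereupon the two sums cancel term by term and $\dot h(t_0)=0$; in Poisson-bracket language this is just the identity $\{H,H\}=0$.

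The cancellation uses nothing chart-specific: the normal geodesic equations are the local expression of a globally defined Hamiltonian vector field on $\mathcal{D}^*$ (equivalently on $T^*M$ via the extension \eqref{M}), so the value $\dot h(t_0)=0$ does not depend on the chosen chart. Hence $\dot h$ is the zero function on the connected interval $\mathbb{R}$, and therefore $h$ is constant; setting $c:=h(0)=H(x(0),p(0))$ gives the claim. There is no serious obstacle here — the only points deserving a sentence are the smoothness of the solution curve (automatic from ODE theory once $H$ is smooth) and the chart-independence of the computation, and both are routine.
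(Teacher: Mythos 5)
Your proposal is correct and follows essentially the same argument as the paper: differentiate $H(x(t),p(t))$ by the chain rule, substitute the normal geodesic equations of Definition \ref{NO}, and observe the term-by-term cancellation, so the derivative vanishes and $H$ is constant along the solution. The extra remarks on chart-independence and smoothness of the solution curve are fine but not needed beyond what the paper's computation already gives.
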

\begin{proof}
 Taking the derivative of  $H(x(t), p(t))$ w.r.t. $t$, we get
  $$\frac{d}{d t} H(x(t), p(t)) = \frac{\partial H}{\partial x^{i}}(x(t), p(t)) \dot{x}(t)+\frac{\partial H}{\partial p_{i}}(x(t), p(t)) \dot{p}(t).$$
  Replacing $\dot{x}(t)$ and $\dot{p}(t)$ by the above sub-Hamiltonian differential equations in the Definition \ref{NO}, we obtain
 \begin{align*}
      \frac{d}{d t} H(x(t), p(t)))  & =\frac{\partial H}{\partial x^{i}}(x(t), p(t)) \frac{\partial H}{\partial p_{i}}(x(t), p(t))-\frac{\partial H}{\partial p_{i}}(x(t), p(t)) \frac{\partial H}{\partial x^{i}}(x(t), p(t)) \\
    & =0.
 \end{align*}
Therefore $H(x(t), p(t))$ is constant.
\end{proof}
\begin{remark} \label{Ca2}
From Lemma \ref{SOl1}, it follows that any solution $\xi(t):=(x(t), p(t))$  of the sub-Hamiltonian differential equations on $\mathcal{D}^*$
for a sub-Hamiltonian function $H(p)$ satisfies  $H(x(t), p(t))=c$.
Let the projection $x (t)= \tau(\xi(t)) \in M$, so each sufficiently short subarc of $x(t)$ is a minimizer sub-Finslerian geodesic, {\rm  (see \cite[Corollary 2.2]{CR})}. In addition, this subarc is the unique minimizer joining its end points.
\end{remark}
The projection curve $x(t)$ mentioned above is said to be the {\em normal sub-Finslerian geodesics}
or simply the {\em normal geodesics}.

\begin{remark}
In the sub-Finslerian geometry, not all the sub-Finslerian geodesics are normal (contrary to the Finsler geometry). This is due to the fact that the sub-Finslerian geodesics which are also a minimizing geodesic might not be solved the sub-Hamiltonian system. Those minimizer that are not normal geodesics called {\it singular} or {\it abnormal} geodesics (see \cite{Mo02} for more details).
\end{remark}

Moreover, we call the extremal pair $\xi(t) =(x(t), p(t))$ a {\em normal extremal} if it is a solution for the sub-Hamiltonian system, otherwise it is called  an {\em abnormal extremal}.

Turning to the relationship between the normal geodesic and the locally length-minimizing horizontal curves, Calin et al. proved in \cite{CD} that any normal geodesic is a horizontal curve and a locally length-minimizing horizontal curve.
After all, by using  (\ref{H}) one can generate the  system of differential equations in terms of canonical coordinates $(x, p)$ as follows:
\begin{align}\label{HH}
\dot{x}^i & = \frac{\partial H}{\partial p_i} = \sum_{j=1}^{k}  \langle  p, X_j(p)\rangle \ (\delta_i (X_j(p)) + \langle p, D_{p_i} X_j (p) \rangle ), \\
\dot{p}_i & = - \frac{\partial H}{\partial x^i} = - \sum_{j=1}^{k}  \langle  p, X_j(p)\rangle \langle  p, D_{x^i}X_j(p)\rangle,
\end{align}
 where $\delta_i$ is the $i$-th coordinate function.

\section{Exponential map in sub-Finsler geometry}
Let $(M,  d)$ be a general metric space, such that $M$ is an $n$-dimensional manifold and the function $d:M \times M \to \mathbb{R}^+ \cup \{\infty\}$,
 is called  a  metric if have the  following properties:  for  all  $x,y,z  \in   M$,
\begin{itemize}
  \item [(i)] $d(x, y) = 0$, with equality if  and  only  if $ x = y$;
  \item [(ii)] $d(x,  y)  +  d(y,  z)  \leq  d(x,  z).$
\end{itemize}
 If the function $d$ is an asymmetric, then we can define the forward metric balls and forward metric spheres, with center $x \in M$ and radius $ r \ \textgreater \ 0$ as follows:
$$ B_ x (r)= \{ \ y \in M: \  d(x, y) \ \textless \ r \},$$
$$ S_ x (r)= \{ \ y \in M: \ d(x, y) =  r \}. $$
The cotangent balls and the cotangent spheres in $\mathcal{D}^*$ are defined as follows:
$$ \mathcal{B}^*_ x (r)= \{ \ p \in  \mathcal{D}^*: \  F^*_x(p) \ \textless \ r \},$$
$$ \mathcal{S}^*_ x (r)= \{ \ p \in  \mathcal{D}^*: \ F^*_x(p) =  r \},$$
 for any fix $x \in M$ and radius $ r$.

A  subset  $U  \subset M$ is  said  to  be  open  if,  for  each  point  $x  \in  U$,  there  is
a   forward metric ball  about  $x$  contained  in  $U$.  Then  we  get  the  topology  on
$M$ and  all  metric  spaces  are  first  countable  and  $T_1$-spaces.  In  general,
we  assume  that  the  metric  $d$  of  any  metric  space  $(M,  d)$  is  continuous  with
respect  to  the  product  topology  on  $M\times M$.  Thus,  every  backward  metric ball, i.e.  $  B^-_ x (r)= \{ \ y \in M: \  d(y, x) \ \textless \ r \}$, is
open  and  the  metric  space  is  a  Hausdorff  ($T_2$)  space.  Hence  the  compact
sets  in  such  a  space  are  closed.

As a result of the above, we  immediately  have  the  following

\begin{proposition}\label{MCo}
  In  a  metric  space  $(M,  d)$  the  following  are  equivalent:
  \begin{itemize}
    \item [(i)] 	A  sequence  $\{x_k \}$  in  $(M,  d)$  converges  to  $x \in M$  in  the  sense  of  topology.
    \item [(ii)] 	$\lim_{k\rightarrow \infty} d(x,  x_k)  =  0$.
  \end{itemize}
\end{proposition}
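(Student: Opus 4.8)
The plan is to prove the two implications separately, using only the definition of the topology on $(M,d)$ through forward metric balls together with the triangle inequality of the metric. The single nontrivial ingredient is an auxiliary observation: \emph{every forward metric ball $B_x(r)$ is an open set containing its centre $x$}. That $x\in B_x(r)$ is clear since $d(x,x)=0<r$. For openness, take an arbitrary $y\in B_x(r)$ and set $\rho:=r-d(x,y)>0$; then for every $z\in B_y(\rho)$ the triangle inequality gives $d(x,z)\le d(x,y)+d(y,z)<d(x,y)+\rho=r$, so $B_y(\rho)\subseteq B_x(r)$. Hence $B_x(r)$ is an open neighbourhood of $x$ for each $r>0$.

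For the implication from (i) to (ii): assume $\{x_k\}$ converges to $x$ in the sense of the topology. Given any $\varepsilon>0$, the ball $B_x(\varepsilon)$ is an open neighbourhood of $x$ by the auxiliary observation, so by topological convergence there is an index $N$ with $x_k\in B_x(\varepsilon)$, i.e. $d(x,x_k)<\varepsilon$, for all $k\ge N$. Since $\varepsilon>0$ was arbitrary, this is precisely $\lim_{k\to\infty}d(x,x_k)=0$.

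For the converse, from (ii) to (i): assume $\lim_{k\to\infty}d(x,x_k)=0$ and let $U$ be any open set with $x\in U$. By the definition of the topology there is a forward ball $B_x(r)\subseteq U$ with $r>0$, and by hypothesis there is $N$ such that $d(x,x_k)<r$, hence $x_k\in B_x(r)\subseteq U$, for all $k\ge N$. Thus every open neighbourhood of $x$ eventually contains all $x_k$, which is exactly what it means for $\{x_k\}$ to converge to $x$ topologically.

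The only step demanding care is the openness of forward balls in the possibly asymmetric setting: one must apply the triangle inequality in the order $d(x,z)\le d(x,y)+d(y,z)$, keeping $x$ fixed as the first argument throughout, so that the shrunk radius $\rho=r-d(x,y)$ indeed controls the forward ball $B_x(r)$ and not the backward one. Once this is secured both implications are immediate, so I do not anticipate any further obstacle.
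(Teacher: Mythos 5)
Your proof is correct: the key point, openness of the forward balls $B_x(r)$ obtained by shrinking the radius to $\rho=r-d(x,y)$ and chaining the triangle inequality in the forward order $d(x,z)\le d(x,y)+d(y,z)$, is exactly what makes both implications work in the asymmetric setting. The paper offers no written proof (it states the proposition as immediate from its definition of the topology via forward metric balls), and your argument is precisely the standard filling-in of that omitted step, so there is nothing to add.
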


\begin{proposition} \label{EX}
Let $x$ be any point in a (reversible) sub-Finslerian manifold $M$,
and $\bar{B}_x(r)$ is a compact ball, for some $r > 0$.
Then for any $y \in B_x(r)$ there is a minimizing geodesic from $x$ to $y$, that is,
  $$d(x,y)= \mathrm{min}\{\ell(\gamma) \ | \gamma:[0,T] \to M \ \mathrm{horizontal},\ \gamma(0)= x, \gamma(T)= y \}.$$
\end{proposition}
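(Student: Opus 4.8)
The plan is to mimic the classical Hopf--Rinow argument for the existence of minimizing geodesics inside a compact ball, adapting it to the sub-Finslerian setting where Chow's theorem guarantees that horizontal curves exist and where normal geodesics (solutions of the sub-Hamiltonian system of Definition \ref{NO}) play the role of the Riemannian exponential images. First I would fix $y \in B_x(r)$ and set $\rho := d(x,y) < r$, and take a sequence of horizontal curves $\gamma_k : [0,1] \to M$ from $x$ to $y$, parametrized proportionally to arc length, with $\ell(\gamma_k) \to \rho$; after discarding finitely many terms we may assume $\ell(\gamma_k) \le r$, so every $\gamma_k$ stays inside $\bar{B}_x(r)$, which is compact by hypothesis. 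The next step is a compactness/limit argument: the curves $\gamma_k$ have uniformly bounded length, hence (being parametrized proportionally to arc length) they are uniformly Lipschitz with respect to $d$, hence equi-Lipschitz and equibounded in the compact metric space $\bar{B}_x(r)$; by Arzel\`a--Ascoli a subsequence converges uniformly to a Lipschitz curve $\gamma : [0,1] \to \bar{B}_x(r)$ with $\gamma(0)=x$, $\gamma(1)=y$.

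The heart of the argument is then to show that the limit curve $\gamma$ is horizontal and that $\ell(\gamma) = \rho$. Lower semicontinuity of the length functional under uniform convergence (which follows from the strong convexity/convexity of $F$ on the fibers of $\mathcal{D}$ and the standard argument that the length of a rectifiable curve is the supremum over partitions of the sums $d(\gamma(t_{i-1}), \gamma(t_i))$) gives $\ell(\gamma) \le \liminf_k \ell(\gamma_k) = \rho$; since $\gamma$ joins $x$ to $y$ we also have $\ell(\gamma) \ge d(x,y) = \rho$ once we know $\gamma$ is admissible, so $\ell(\gamma) = \rho$ and $\gamma$ is minimizing. The admissibility of $\gamma$ is the delicate point: a uniform limit of horizontal curves need not a priori be horizontal, but because the $\gamma_k$ are parametrized with $F(\dot\gamma_k) \equiv \ell(\gamma_k) \le r$ bounded, the velocities $\dot\gamma_k$ lie in a fixed compact subset of $\mathcal{D}$ (locally), and the distribution $\mathcal{D}$ being a closed subbundle of $TM$, a weak-$*$ limit of the velocity curves is again a section of $\mathcal{D}$; one then identifies this weak limit with $\dot\gamma$. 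Alternatively, and more in the spirit of the paper, one invokes that inside the compact ball the minimizer is realized by concatenating short normal-geodesic subarcs: by Remark \ref{Ca2} every sufficiently short subarc of a normal geodesic is the unique minimizer between its endpoints, so one covers $\gamma$ by finitely many such pieces and patches them, obtaining a genuinely horizontal minimizing geodesic.

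The main obstacle I anticipate is precisely the passage to the limit while retaining horizontality --- in Riemannian geometry this is automatic because one may work with geodesic segments from the start, but in the sub-Finslerian case the constraint $\dot\gamma \in \mathcal{D}$ must be propagated through the $C^0$ limit, and one must be careful that the limiting parametrization is still (proportional to) arc length so that the $F$-length, rather than merely the metric variation, is controlled. A secondary technical point is that $F$ is only defined on $\mathcal{D} \setminus \{0\}$, so segments where the limit curve has zero velocity require a separate (trivial) treatment. Once horizontality is secured, minimality is the easy half, and the final ``$\min$'' in the statement (as opposed to ``$\inf$'') follows because the infimum is attained by $\gamma$.
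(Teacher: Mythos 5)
Your proposal follows essentially the same route as the paper: a minimizing sequence of horizontal curves of uniformly bounded length inside the compact ball, Arzel\`a--Ascoli to extract a uniform limit, lower semicontinuity of the length to conclude $\ell(\gamma)=d(x,y)$, with horizontality of the limit as the delicate step. The weak-$*$ velocity argument you sketch for admissibility is precisely what the paper delegates to Theorem 3.41 of \cite{ABB}, so no substantive difference remains (your alternative ``patching of normal subarcs'' aside, which is unnecessary).
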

\begin{proof}
  Fix $y \in B_x(r)$ and suppose that $\gamma_k :[0,T] \to M$ is a minimizing sequence of horizontal paths with unit speed from $x$ to $y$ and such that
  $$\lim_{k\rightarrow \infty} \gamma_k (0)= x, \quad \lim_{k\rightarrow \infty} \gamma_k (T)= y, \quad  \lim_{k\rightarrow \infty} \ell(\gamma_k)= d(x,y).$$
  For the reason that $d(x,y) < r$, we get $\ell(\gamma_k) \leq r$ for all $k \geq k_0$ large enough.
 Proposition \ref{MCo} asserts that the metric $d$ is continuous under the topology of the manifold and the reversibility of
$F$ holds on a compact set.
 Consequently, any sequence $\gamma_k$ of curves which have uniformly bounded lengths has an uniformly convergent subsequence (Ascoli–-Arzela theorem), we denote this subsequence by the same symbol, and a Lipschitz curve $\gamma: [0,T] \to M$.

From above one can assume that $\gamma_k: [0,T] \to M$ is a convergent subsequence of length minimizers
parametrized by arc length (i.e. $F(\dot{\gamma}(t))=1$)
on $M$ such that such that $\gamma_k \to \gamma$ uniformly on $[0, T]$. This gives that
$$\ell(\gamma_k)  = d(\gamma_k (0),  \gamma_k (T)),$$
which is due to the claim that $\gamma_k$ is a minimizing geodesic.
   The sequence $\gamma_k$ converges uniformly if for every $\epsilon > 0$  there is
   a natural number ${\displaystyle N}$ such that for all ${\displaystyle n\geq N}$
   and all $t \in [0, T]$  one has  $d(\gamma_k (t),  \gamma(t)) < \epsilon.$
   Further, the semicontinuity of the length implies that if $ \lim_{k\rightarrow \infty} \gamma_k = \gamma$ then
   $$\ell(\gamma) \leq \lim_{k\rightarrow \infty} \mathrm{inf}\  \ell(\gamma_k).$$
   Now, by continuity of the distance, we obtain
   $$\ell(\gamma) \leq \lim_{k\rightarrow \infty} \mathrm{inf} \ \ell(\gamma_k) = \lim_{k\rightarrow \infty} \mathrm{inf} \ d(\gamma_k (0),  \gamma_k (T))= d(\gamma (0),  \gamma (T)).$$
   This yields that $\gamma$ is minimizing geodesic, i.e. $\ell(\gamma)= d(x,y)$.
   The horizontal property of $\gamma$ follows in the same way as was done in \cite{ABB}, Theorem 3.41.
   \end{proof}
Next, we define the exponential map.  For the general case, roughly speaking, if $M$ is a smooth Finsler manifold, $x$ a point in $M$
and $u \in T_xM$. Then the exponential map is given by
$$\mathrm{exp}_x: T_xM \to M,$$
such that $\mathrm{exp}_x(u)= \gamma_{u}(1)$ for the unique geodesic $\gamma$ that starts at $x$ and has initial speed vector $u$.
Furthermore, in the dual space the exponential map for every $x \in M$ and $p \in T^*_xM$ defined by
$$\mathrm{exp}^*_x: T^*_xM \to M,$$
such that $\mathrm{exp}^*_x(p)= \gamma_{p}(1)$ for the unique geodesic $\gamma$ that starts at $x$ and has initial speed vector $u=\mathcal{L}^{-1}_L(p)$,
where $L$ here is the  Lagrangian of the Finsler manifold.

The exponential map is an essential object in sub-Finslerian geometry, parametrizing normal extremals through their initial covectors. We are going to define the exponential map in both of the distribution
 $\mathcal{D}, \mathcal{D}^*$ of the tangent and the cotangent bundles respectively.

\begin{definition}
Let $\Omega_x\subset  \mathcal{D}_x$ be the domain of the exponential map over $x\in M$ such that $\Omega_x$ given by
$$ \Omega_x = \left \{ v \in \mathcal{D}_x | \ \xi \  \text{is  defined   on  the  interval}\ [0,1]\right \},$$
where $v =\mathcal{L}_H(p)$ by the Legendre transformation of sub-Hamiltonian $H$, and $ \xi(t)$ is the normal extremal.
Then the {\em sub-Finsler exponential map} is defined as follows
$$\mathrm{exp}_x:\Omega_x \subset  \mathcal{D}_x  \subset T_xM\ \to \ M,  \  v \mapsto \pi_{\mathcal{D}}(\mathcal{L}_H( \xi(1))).$$

We can do the same in the distribution $\mathcal{D}^*_x$. Let $\Omega^*_x\subset  \mathcal{D}^*_x$ be the domain of the exponential map over $x\in M$ such that $\Omega^*_x$ given by
$$ \Omega^*_x = \left \{ p \in \mathcal{D}^*_x | \ \xi \  \text{is  defined   on  the  interval}\ [0,1]\right \}.$$
Consequently, the {\em sub-Hamiltonian exponential map} is given by
$$\mathrm{exp}^*_x:\Omega^*_x \subset  \mathcal{D}^*_x   \subset T^*_xM\ \to \ M,  \  p \mapsto \tau( \xi(1)),$$
where $\xi(t)$ is the same normal extremal as above. The set $ \Omega^*_x $ contains the origin and star-shaped with respect to 0.
Moreover, with the help of Legendre transformation it is fairly easy to see that
\begin{equation}\label{EXPO}
\mathrm{exp}_x(v)= \mathrm{exp}^*_x(p), \quad \mathrm{where} \quad p =\mathcal{L}_L(v).
\end{equation}
It follows that the normal sub-Finslerian
geodesics $x(t) = \tau(\xi(t))$ satisfies
$$x(t) = \mathrm{exp}^*_x(tp), \quad \mathrm{for \ all} \ t \in [0,T].$$
\end{definition}

\begin{theorem}
  The exponential  mapping $\mathrm{exp}^*_x$  is  a  local  diffeomorphism on $\mathcal{D}^*_x \subset T_x^*M\backslash\{0\}$.
\end{theorem}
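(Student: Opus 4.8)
The plan is to apply the inverse function theorem, so the whole matter reduces to checking that the differential $(d\,\mathrm{exp}^*_x)_p$ is a linear isomorphism at each $p\in\widetilde{\mathcal{D}}^*_x$. First I would note that $\mathrm{exp}^*_x$ is smooth: by its very definition $\mathrm{exp}^*_x(p)=\tau(\xi_p(1))$, where $\xi_p(t)=(x_p(t),p_p(t))$ is the solution of the normal geodesic equations of Definition \ref{NO} with $\xi_p(0)=(x,p)$, and the theorem on smooth dependence of an ODE solution on its initial data makes $p\mapsto\xi_p(1)$, hence $\mathrm{exp}^*_x$, of class $C^\infty$ on the star-shaped domain $\Omega^*_x$.

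Next I would use the degree-$2$ homogeneity of $H$ in $p$ recorded in \eqref{HOM}. Since then $\partial H/\partial p_i$ is homogeneous of degree $1$ and $\partial H/\partial x^i$ of degree $2$ in $p$, a direct check against the normal geodesic equations gives the scaling law $\xi_{sp}(t)=(x_p(st),\,s\,p_p(st))$ for every $s>0$. In particular $\mathrm{exp}^*_x(sp)=x_p(s)$, so the rays through the origin are carried onto the normal geodesics issuing from $x$, and differentiating at $s=1$ yields $(d\,\mathrm{exp}^*_x)_p(p)=\dot x_p(1)=(\partial H/\partial p)(x_p(1),p_p(1))$. This vector is nonzero on $\widetilde{\mathcal{D}}^*_x$: by Lemma \ref{SOl1}, $H(x_p(1),p_p(1))=H(x,p)=\frac{1}{2}(F^*_x(p))^2>0$, whereas $\partial H/\partial p$ --- the contraction of $p$ with the positive semidefinite tensor $g^{*ij}$, whose kernel is the annihilator $\mathcal{D}^0$ on which $H\equiv 0$ --- can vanish only where $H=0$. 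Hence $(d\,\mathrm{exp}^*_x)_p$ is injective at least along the radial direction.

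For the remaining directions I would linearize the Hamiltonian flow along $\xi_p$: differentiating the normal geodesic equations produces a linear system of ODEs whose time-$1$ flow is a symplectic automorphism of the tangent space to $\mathcal{D}^*$ at $(x,p)$, and composing it with $d\tau$ and matching with the radial computation above identifies $(d\,\mathrm{exp}^*_x)_p$. Its non-degeneracy is exactly the statement that $x_p$ carries no point conjugate to $x$ at parameter $1$; for this I would invoke the strong convexity of $F$ --- equivalently the positive-definiteness of $g^*$, equivalently the strict convexity of the indicatrix $I_x$ --- to make the second-variation form of the energy along $x_p$ positive definite, so that no conjugate point occurs, and the inverse function theorem then produces the asserted local diffeomorphism. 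The main obstacle is precisely this conjugate-point control: unlike in the Riemannian setting there is no single Gauss-lemma argument valid uniformly on $\widetilde{\mathcal{D}}^*_x$ at once, so the non-degeneracy of $(d\,\mathrm{exp}^*_x)_p$ must be extracted from the local-uniqueness statement of Remark \ref{Ca2} (sufficiently short subarcs of a normal geodesic are the unique length-minimizers between their endpoints) together with the strict convexity of the indicatrix, which together exclude the first conjugate point in the range under consideration.
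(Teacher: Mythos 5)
Your opening steps are fine and in fact parallel the first half of the paper's proof: smoothness of $\mathrm{exp}^*_x$ away from the zero covector via smooth dependence of the Hamiltonian flow on initial data, the scaling law coming from the degree-$2$ homogeneity \eqref{HOM} (the paper writes it as $\tau(\xi_p(at))=\tau(\xi_{ap}(t))$), and the nonvanishing of the radial derivative via conservation of $H$ (Lemma \ref{SOl1}). The genuine gap is in your final step. Nondegeneracy of $(d\,\mathrm{exp}^*_x)_p$ at \emph{every} nonzero $p$ is precisely the assertion that no normal geodesic issuing from $x$ has a conjugate point at parameter $1$, and neither ingredient you invoke can deliver that. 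Strong convexity of $F$ (positive definiteness of $g^*$) is a fiberwise condition: it makes the second variation positive only for sufficiently short arcs and says nothing about the time-one flow for large covectors --- already on the round sphere (a sub-Finsler manifold with $\mathcal{D}=TM$) the differential of $\exp_x$ degenerates at every covector of norm $\pi$. Likewise Remark \ref{Ca2} asserts only that sufficiently short subarcs are unique minimizers; after your own rescaling $\mathrm{exp}^*_x(sp)=x_p(s)$ this controls a punctured neighbourhood of the origin in $\mathcal{D}^*_x$, not all of $\widetilde{\mathcal{D}}^*_x$. You flag this as ``the main obstacle'' but never close it, and as an everywhere-on-$\widetilde{\mathcal{D}}^*_x$ statement it cannot be closed by such an argument. (There is also a dimensional point glossed over in the phrase ``linear isomorphism'': when $\operatorname{rank}\mathcal{D}^*=k<n$, the map $(d\,\mathrm{exp}^*_x)_p:\mathcal{D}^*_x\to T_{\mathrm{exp}^*_x(p)}M$ can at best be injective.)

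The paper's own proof aims at something weaker and proceeds differently: it uses the homogeneity identity together with the regularity of the sub-Hamiltonian vector field $\vec{H}$ --- $C^\infty$ off $p=0$ and $C^1$ at $p=0$ --- to conclude that $\mathrm{exp}^*_x$ is $C^\infty$ away from the origin and $C^1$ at the origin with $d(\mathrm{exp}^*_x)|_{0}=\mathrm{id}$, and then obtains the local diffeomorphism from the inverse function theorem applied at the origin, exactly as in the Finslerian argument of Bao--Chern--Shen transplanted to $\mathcal{D}^*$. That computation of the differential at the origin, which is what actually powers the paper's conclusion, never appears in your proposal. If you want to argue in the paper's spirit, compute $d(\mathrm{exp}^*_x)|_{0}$ directly from $\mathrm{exp}^*_x(tp)=x_p(t)$ and $\dot{x}_p(0)=\mathcal{L}_H(p)$, rather than attempting to exclude conjugate points along entire normal geodesics.
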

\begin{proof}
  In \ref{HOM}, we show the homogeneity of the sub-Hamiltonian function $H(x, p)$ with  respect  to  $p$.
   So, for  any  constant  $a  > 0$,  the curve  $\xi(at)  :  (\epsilon/a,\epsilon/a)  \to   M$  is  the  same  geodesic  satisfying  the  initial
conditions   $\tau(\xi_p(0))   =  x$  and   $\xi_p(0)   =  ap$,  i.e.,
$$\tau(\xi_p(at))   =  \tau(\xi_{ap}(t)).$$
Since the sub-Hamiltonian vector field $$\vec{H}(x, p) = g^{ab}(x, p)p_b  \frac{ \partial}{\partial x^a} - \frac{1}{2} \frac{\partial g^{ab}}{\partial x^k}(x, p) p_ap_b \frac{ \partial}{\partial p_k},$$ that introduced in \cite{L.K}, is smooth except  for $p=0$ where  it  is $C^1$. Then $\mathrm{exp}^*_x$ is $C^{\infty}$ on $\mathcal{D}^*_x \subset T_x^*M\backslash\{0\}$, while  it  is  $C^1$  at  $p  =  0$  and  $d(\mathrm{exp}^*_x)|_{0}  =  \mathrm{id}$.  Thus,  $\mathrm{exp}^*_x$  is  a  local  diffeomorphism.
\end{proof}
By  equation (\ref{EXPO}),  one  can  get  the  following

\begin{corollary}
  The sub-Finsler exponential map $\mathrm{exp}_x$ is a $C^{\infty}$ away from the zero section of $\mathcal{D}$ and only $C^1$ at the zero section such that for each $x \in M$
$$d(\mathrm{exp}_x)_{|0} : \Omega_x \subset  \mathcal{D}_x \to \Omega_x \subset  \mathcal{D}_x,$$
is the identity map at the origin $0 \in \mathcal{D}_x$.
\end{corollary}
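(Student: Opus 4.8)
The plan is to deduce the corollary from the preceding theorem through the identity \eqref{EXPO}, that is, from the factorization $\mathrm{exp}_x = \mathrm{exp}^*_x \circ \mathcal{L}_L$ on $\Omega_x$, where $\mathcal{L}_L : \mathcal{D}_x \to \mathcal{D}^*_x$ is the Legendre transformation of $(M,\mathcal{D},F)$. First I would record the regularity of $\mathcal{L}_L$: being the fibre derivative of $L = \frac{1}{2}F^2$, and $F$ being $C^\infty$ on $\widetilde{\mathcal{D}}$ and satisfying the strong convexity condition, $\mathcal{L}_L$ restricts to a $C^\infty$ diffeomorphism of $\widetilde{\mathcal{D}}_x$ onto $\widetilde{\mathcal{D}}^*_x$, with inverse $\mathcal{L}_H$. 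Moreover $L$ extends to the zero section and is $C^1$ there (although in general not $C^2$), since in a one-dimensional slice $t \mapsto L_x(tw) = \frac{1}{2}t^2 F_x(\pm w)^2$ is $C^1$ at $t=0$; consequently $\mathcal{L}_L$ extends to a $C^1$ map on all of $\mathcal{D}_x$ with $\mathcal{L}_L(0_x) = 0_x$, because $\mathcal{L}_L(0_x)(w) = \frac{d}{dt}\big|_{t=0} L_x(tw) = 0$ for every $w$.

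Next, the regularity claim for $\mathrm{exp}_x$ follows by composition. Away from the zero section of $\mathcal{D}$, the map $\mathcal{L}_L$ carries $\widetilde{\mathcal{D}}_x$ into $\mathcal{D}^*_x \subset T_x^*M\setminus\{0\}$ and is $C^\infty$ there, while $\mathrm{exp}^*_x$ is $C^\infty$ on $\mathcal{D}^*_x \subset T_x^*M\setminus\{0\}$ by the theorem; hence $\mathrm{exp}_x = \mathrm{exp}^*_x \circ \mathcal{L}_L$ is $C^\infty$ on $\widetilde{\mathcal{D}}_x$. At the zero section both factors are only $C^1$, so $\mathrm{exp}_x$ is only $C^1$ at $0 \in \mathcal{D}_x$.

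It then remains to identify $d(\mathrm{exp}_x)_{|0}$, and here I would argue directly from the normal geodesic description rather than formally composing differentials at a point of low regularity. For $v \in \Omega_x$ with $p = \mathcal{L}_L(v)$, the curve $t \mapsto \mathrm{exp}_x(tv) = \mathrm{exp}^*_x(tp) = \tau(\xi(t))$ is the normal geodesic $x(t)$ issuing from $x$ with $\dot{x}(0) = \mathcal{L}_H(p) = v$, by the last displayed equation of the preceding section together with the normal geodesic equations of Definition \ref{NO}. Differentiating at $t = 0$ gives $\frac{d}{dt}\big|_{t=0}\mathrm{exp}_x(tv) = v$, and since this holds for every $v$, under the canonical identification $T_0\mathcal{D}_x \cong \mathcal{D}_x$ the differential $d(\mathrm{exp}_x)_{|0}$ is the identity map of $\mathcal{D}_x$, as asserted.

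The main obstacle is the behaviour at the zero section: one must justify that $F^2$, equivalently $L$, is $C^1$ but not in general $C^2$ up to the zero vector, so that $\mathcal{L}_L$ and hence $\mathrm{exp}_x$ inherit exactly $C^1$ regularity there; and one must avoid invoking the chain rule for the differential at $0$, where $\mathrm{exp}^*_x$ and $\mathcal{L}_L$ are merely $C^1$, by instead reading $d(\mathrm{exp}_x)_{|0}$ off directly from the normal geodesic equations, exactly as in the classical Finslerian treatment (cf. \cite{BCS00}).
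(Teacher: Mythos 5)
Your overall route---deriving the corollary from the theorem on $\mathrm{exp}^*_x$ through the factorization $\mathrm{exp}_x=\mathrm{exp}^*_x\circ\mathcal{L}_L$ of (\ref{EXPO})---is exactly the paper's route (the paper offers nothing beyond ``by equation (\ref{EXPO})''), and your treatment of the smooth part is fine: $\mathcal{L}_L$ is a $C^\infty$ diffeomorphism of $\widetilde{\mathcal{D}}_x$ onto its image, which avoids the zero covector since $\mathcal{L}_L(v)(v)=F_x^2(v)>0$, so the composition is $C^\infty$ away from the zero section.

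There is, however, a genuine gap at the zero section. The step ``$L$ is $C^1$ but in general not $C^2$ at $0$, consequently $\mathcal{L}_L$ extends to a $C^1$ map on all of $\mathcal{D}_x$'' is a non sequitur and is in fact false for a genuinely non-quadratic sub-Finsler metric: $\mathcal{L}_L$ is the fibre derivative of $L$, so its being $C^1$ at $0$ would require $L$ to be $C^2$ there, which you yourself rule out; more decisively, $\mathcal{L}_L$ is positively homogeneous of degree one, and a $1$-homogeneous map differentiable at the origin is necessarily linear, which by Euler's relation $\mathcal{L}_L(v)(v)=2L(v)$ would force $F_x^2$ to be quadratic, i.e.\ the sub-Riemannian case. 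Thus $\mathcal{L}_L$ is merely continuous at $0$, the chain rule does not deliver $C^1$-ness of $\mathrm{exp}_x$ at the zero section, and your computation $\frac{d}{dt}\big|_{t=0}\mathrm{exp}_x(tv)=\mathcal{L}_H(\mathcal{L}_L(v))=v$, while correct, only identifies the directional (Gateaux) derivatives---it presupposes rather than proves differentiability of $\mathrm{exp}_x$ at $0$. To close the gap you need the classical tangent-side argument (cf.\ \cite{BCS00}): express the normal geodesic flow through a spray whose coefficients are positively homogeneous of degree two in the fibre variable, hence $C^1$ across the zero section, so that $\mathrm{exp}_x$ is $C^1$ at $0$; your directional computation then pins down $d(\mathrm{exp}_x)_{|0}=\mathrm{id}$. (The paper itself is silent on this point, and its theorem's $C^1$ claim for $\mathrm{exp}^*_x$ at $p=0$ suffers from the same homogeneity issue, but as written your argument for the $C^1$ statement does not go through.)
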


\begin{remark} \label{exp}
 It is clear that in the case of sub-Finsler exponential map the following expressions holds:
$$\mathrm{exp}^*_x[ \mathcal{B}^*_ x (r)]= B_ x (r),$$
$$\mathrm{exp}^*_x[ \mathcal{S}^*_ x (r)]= S_ x (r),$$
which are analogous to the Finslerian context, see Bao et al.\ \cite{BCS00} for more details.
\end{remark}

\begin{remark}
Turning to sub-Riemannian case, Strichartz in \cite{St86} stated that for bracket generating distributions the exponential map is a local diffeomorphism. This is due to the fact that the solutions of the
sub-Hamiltonian system depend differentially on the initial data. But this is a difference from the
Riemannian context, the exponential map is not a diffeomorphism at the origin just like the Finslerian case.
\end{remark}

\section{Hopf-Rinow Theorem in sub-Finslerian geometry}

In the following, one can see the explanation of the terms that will be used in Hopf-Rinow Theorem. A sub-Finsler manifold is
said to be {\em forward complete} if every forward Cauchy sequence converges, and it is a {\em forward geodesically complete} if every  normal geodesic $\gamma(t), t \in [0, T) $ parametrized to have unit speed, can be extended to a geodesic for all $ t \in [0, \infty).$ A subset is said to be {\em forward bounded} if it is contained in some forward metric ball $B_x (r)$.
\begin{theorem} Let $(M, \mathcal{D},F)$ be any connected sub-Finsler manifold, where $\mathcal{D}$ is bracket generating distribution. The following conditions are equivalent:
\begin{itemize}
\item [(i)] The metric space $(M, d)$ is forward complete.
\item [(ii)] The sub-Finsler manifold $(M, \mathcal{D},F)$ is forward geodesically complete.
\item [(iii)] $\Omega^*_x =  \mathcal{D}^*_x$, additionally, the exponential map is onto if there are no strictly abnormal minimizer.
\item [(iv)] Every closed and forward bounded subset of  $(M, d)$  is compact.

 \end{itemize}
Furthermore, for any $x, y \in M$ there exists a minimizing geodesic $\gamma$ joining $x$ to $y$, i.e. the length of this geodesic is equal to the distance between these points.

\begin{proof}
(i)$ \implies$ (ii)  Let $\gamma(t) :[0, T)\to\ M$ be a unit speed and maximally forward extended geodesic, $t \in [0, T) $. If we assume that $T\neq \infty$, and choose a sequence $\{t_i\} \to T$ in $[0, T)$ then ${\gamma(t_i)}$ is forward Cauchy, since $$d(\gamma(t_i), \gamma (t_j)) \leq |t_j - t_i| ,   \  \mathrm{for \ all} \  i  \leq  j.$$
Now, (i) makes it obvious that $\gamma(t_i)$ converges to $y \in M$. On one hand, let us define $\gamma (T)$ to be $y$.
On the other hand, Lemma 4.1 in \cite{St86} told us that $\gamma (t)$ can be extended beyond $t=T$. This contradicts our assumption the fact that $T\neq \infty$. Thus, $T=\infty$ for sure, so we have the forward geodesically completeness.

(ii)$ \implies$ (iii) It is sufficient (for first part $\Omega^*_x =  \mathcal{D}^*_x$) to prove that any normal extremal pair $\xi(t)$, starting from the initial conditions, is defined for all $t \in \mathbb{R}$. Suppose that the normal extremal is not extendable to the some interval $[0, T+\delta)$ for all $\delta > 0$ and suppose that it is defined on $[0, T)$. Let $\{t_i\}$ be any increasing sequence such that the limit of this sequence is $T$. Hence, the projection $x (t)= \tau(\xi(t))$ is a curve with unit speed defined on $[0,T)$, therefore, the sequence $\{t_i\}$ is a forward Cauchy sequence on $M$, since
$$d(x(t_i), x(t_j)) \leq | t_i - t_j|.$$

By completeness, it follows that the sequence $x(t_i)$  converges to some point $y \in M$.
We suppose there are coordinates around the point $y$ and an orthonormal frame $X_1,X_2,...,X_k$ in small ball $\mathcal{B}^*_y(r)$ in the sub-Finsler bundle. Let us show that in the
coordinates  $\xi(t)= (x(t),p(t))$ the curve $x(t)$ is uniformly bounded.
This grants a contradiction that the normal extremal is not extendable.
In fact, for every  $p \in  \mathcal{D}^*$, we consider the following non-negative form (\ref{H}) of the sub-Hamiltonian function $H$:
\begin{equation*}\
  H(x, p) =  \frac{1}{2} \sum_{i=1}^{k}  \langle  p, X_i(p)\rangle^2.
\end{equation*}
Then, the sub-Hamiltonian system has the form:
\begin{align*}\
\dot{x}^i(t) & = \frac{\partial H}{\partial p_i} (x(t),p(t)) = \sum_{j=1}^{k}  \langle  p(t), X_j(p(t))\rangle  (\delta_i (X_j(p)) + \langle p, D_{p_i} X_j (p) \rangle ), \\
\dot{p}_i(t) & = - \frac{\partial H}{\partial x^i}(x(t),p(t)) = - \sum_{j=1}^{k}  \langle  p(t), X_j(p(t))\rangle \langle  p(t), D_{x^i}X_j(p(t))\rangle,
\end{align*}
for $t \in [T -\delta, T)$ with $\delta>0$ small enough. Since $D_{\gamma(t)}X_i$ are given in a compact small ball $\bar{\mathcal{B}}^*_y(r)$, they are bounded, so there is a constant $\mathcal{C}>0$ such that
$$|\dot{p}(t)| \leq \mathcal{C} |p(t)| \quad \forall t \in [T -\delta, T).$$
If we apply Gronwall's Lemma (see \cite{RI}, p.122), it leads us to that $|p(t)|$ is uniformly bounded on a bounded interval. This contradicts our assumption that the normal extremal can not be extended beyond $T$.

(iii)$ \implies$ (iv)  Assume that $\bar{A}$ is a closed and forward bounded subset of $(M, d)$. Applying the bracket generating assumption, for every $y \in \bar{A}$, Proposition \ref{EX} asserts that there is a minimizing geodesic $\mathrm{exp}^*_x(tp_y),$ $ 0 \leq t \leq T,$ from $x$ to $y$. The set of all $p_y$ is subset $A$ of $\mathcal{D}^*_x$.
Since $F^*_x(p_y) = d(x,y)$,  and  $d(x,y)\leq r$  for some $r$ due to the forward boundedness of $\bar{A}$,
the subset $A$ is bounded and contained in the compact set $\mathcal{B}^*_x(r) \cup \mathcal{S}^*_x(r)$.
By Remark \ref{exp}, $\mathrm{exp}^*_x[\mathcal{B}^*_x(r) \cup \mathcal{S}^*_x(r)]$ is compact and contained in the closed set $\bar{A}$, then $\bar{A}$ it must be compact.

(iv)$\implies$ (i) Let $\{x_i\}$ be a forward Cauchy sequence in $M$, and by the subadditivity it must be forward bounded.
Choose $A:=\{x_i | i \in \mathbb{N}\}$, then its closure $\bar{A}$ is still forward bounded under the manifold topology of $M$.
Taking into account the assumption (iv), $\bar{A}$ should satisfy the compactness property, therefore, the sequence $\{x_i\}$ contains a convergent subsequence.

Let $\{x_k\}$ be a convergent subsequence, consider it converges to some $y \in \bar{A} \subset M$. In other hand, we need to check that $\{x_i\}$ converges to $y \in \bar{A} \subset M$. To do this, fix $\epsilon \ \textgreater \ 0$, since $\{x_i\}$ is forward Cauchy, there exist a positive number $n_0$ such that $j  \ \textgreater \ i \geq n_0$, then $$ d(x_i,x_j) \ \textless \ \frac{\epsilon}{2}.$$ At the same time $\{x_k\}$ converge to $y$. So there is a positive number $n_1$ such that if $ k \geq n_1$, then $$ d(x_k, y) \ \textless \ \frac{\epsilon}{2}.$$

One can assume that $n$ is greater than $n_0$ and $n_1$. If needed, by expanding $n$ further, there is no loss of generality in assuming that $n$ indeed equals some index of the convergent subsequence. Then $d(x_n, y) \leq \ \displaystyle\frac{\epsilon}{2}$, so, for $i > n$, we get
$$ d(x_i, y) \ \leq \ d(x_i, x_n) + d(x_n, y)  \textless \ \frac{\epsilon}{2} + \frac{\epsilon}{2}=\epsilon.$$
So, we have been shown that every forward Cauchy sequence is convergent. Hence $(M,d)$ is forward complete.

At the end, we can use the same proof of Proposition \ref{EX} to verify that for every $x, y \in  M$ there exists a length minimizing geodesic joining $x$ and $ y$, and it has to be  normal geodesic by Remark \ref{Ca2}. Finally, the property of compactness and completeness with help of Proposition \ref{EX}, proves the second part of (iii).

\end{proof}

\end{theorem}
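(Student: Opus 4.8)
The plan is to establish the cycle of implications $(i)\Rightarrow(ii)\Rightarrow(iii)\Rightarrow(iv)\Rightarrow(i)$ and then to deduce the final minimizing-geodesic statement from $(iv)$ together with Proposition~\ref{EX}. The recurring technical inputs will be the conservation law of Lemma~\ref{SOl1} (so that $F^*$ is constant along a normal extremal, since $H=\tfrac12(F^*)^2$), the short-time solvability of the normal geodesic equations of Definition~\ref{NO}, and the image formulas of Remark~\ref{exp}. For $(i)\Rightarrow(ii)$ I would take a unit-speed normal geodesic $\gamma$ extended maximally to the right on $[0,T)$ and assume $T<\infty$; for $t_i\nearrow T$ the bound $d(\gamma(t_i),\gamma(t_j))\le|t_j-t_i|$ shows $\{\gamma(t_i)\}$ is forward Cauchy, so by $(i)$ it converges to some $y$, and the local extendability of normal geodesics (\cite{St86}) contradicts maximality; hence $T=\infty$.

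For $(ii)\Rightarrow(iii)$ I would show that every normal extremal $\xi(t)=(x(t),p(t))$ is complete, which is precisely the statement $\Omega^*_x=\mathcal D^*_x$. Suppose $\xi$ is defined on $[0,T)$ but not beyond. By Lemma~\ref{SOl1} the value $F^*_{x(t)}(p(t))$ is constant, so after rescaling $x(t)=\tau(\xi(t))$ is unit speed and $p(t)$ stays off the zero section; as before $\{x(t_i)\}$ is forward Cauchy and converges to some $y$. Working in coordinates near $y$ with the orthonormal frame $X_1,\dots,X_k$ and the system~\eqref{HH}, the boundedness of the frame and its derivatives on a compact cotangent ball $\bar{\mathcal B}^*_y(r)$ gives an estimate $|\dot p(t)|\le\mathcal C\,|p(t)|$ on $[T-\delta,T)$, and Gronwall's lemma (\cite{RI}) then bounds $|p(t)|$; thus $\xi$ remains in a compact set and extends past $T$, a contradiction. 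For the remaining clause: in the absence of strictly abnormal minimizers every minimizing geodesic is normal by Remark~\ref{Ca2}, hence of the form $t\mapsto\exp^*_x(tp)$, and then Proposition~\ref{EX} shows $\exp^*_x$ is onto.

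For $(iii)\Rightarrow(iv)$ the preliminary observation is that $\Omega^*_x=\mathcal D^*_x$ makes $\exp^*_x$ defined and continuous on all of $\mathcal D^*_x$, so the closed bounded set $\bar{\mathcal B}^*_x(r)=\mathcal B^*_x(r)\cup\mathcal S^*_x(r)$ in the finite-dimensional fiber is compact and, by Remark~\ref{exp}, its image $\bar B_x(r)$ is compact too; this removes any apparent circularity in invoking Proposition~\ref{EX}. Then for a closed forward-bounded $\bar A$, choosing $x$ with $\bar A\subset B_x(r)$, each $y\in\bar A$ equals $\exp^*_x(p_y)$ with $F^*_x(p_y)=d(x,y)\le r$, so $\bar A\subset\exp^*_x[\bar{\mathcal B}^*_x(r)]$, whence $\bar A$, being closed in a compact set, is compact. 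For $(iv)\Rightarrow(i)$, a forward Cauchy sequence is forward bounded by subadditivity, its closure is compact by $(iv)$, so it has a convergent subsequence, and a forward Cauchy sequence with a convergent subsequence converges (the $\tfrac{\epsilon}{2}+\tfrac{\epsilon}{2}$ argument via Proposition~\ref{MCo}); hence $(M,d)$ is forward complete. Finally, for arbitrary $x,y$ we may use $(iv)$: $\bar B_x(r)$ is compact for $r>d(x,y)$, Proposition~\ref{EX} yields a length-minimizing geodesic from $x$ to $y$, and it is normal by Remark~\ref{Ca2} unless it is strictly abnormal, which also completes the $\exp$-surjectivity clause of $(iii)$.

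I expect the main obstacle to be $(ii)\Rightarrow(iii)$: one must be certain that the extremal does not run into the zero section (handled by conservation of $F^*$) and that $x(t)$ really stays inside one coordinate chart around $y$ as $t\to T$ (handled by $d(x(t),y)\to 0$), so that the coordinate estimate feeding Gronwall's lemma is valid all the way up to the blow-up time. The genuinely conceptual point, as in the sub-Riemannian results of Strichartz, Rifford and Agrachev et al., is that surjectivity of $\exp^*_x$ can fail in the presence of strictly abnormal minimizers, so that part of $(iii)$ has to be stated and proved conditionally.
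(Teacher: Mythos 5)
Your proposal follows essentially the same route as the paper: the same cycle $(i)\Rightarrow(ii)\Rightarrow(iii)\Rightarrow(iv)\Rightarrow(i)$, with the Cauchy-sequence plus Gronwall argument for $(ii)\Rightarrow(iii)$, Remark~\ref{exp} together with Proposition~\ref{EX} for $(iii)\Rightarrow(iv)$, the $\epsilon/2$-argument for $(iv)\Rightarrow(i)$, and the same concluding treatment of the minimizing geodesic and the conditional surjectivity clause of $(iii)$. The only difference is that you make explicit a few points the paper leaves implicit (constancy of $F^*$ along the extremal so $p(t)$ avoids the zero section, the curve staying in one chart near $y$, and the compactness of $\bar B_x(r)$ needed before invoking Proposition~\ref{EX} in $(iii)\Rightarrow(iv)$), which is a welcome but minor refinement rather than a different approach.
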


\end{document}